\numberwithin{equation}{section}
\newtheorem{theorem}{Theorem}[section]
\newtheorem{corollary}{Corollary}[theorem]
\newtheorem{lemma}[theorem]{Lemma}
\begin{document}
\author{Alexander E. Patkowski}
\title{A note on the $l$-fold Bailey Lemma and Mixed Mock Modular forms}

\maketitle
\begin{abstract}In this paper we present a method for constructing multiple-sum $q$-series for what is known as Mixed Mock Modular forms. This is one of many potential applications of a result on $l$-fold Bailey pairs given herein. We also present some multi-sum analogues of the Durfee identity, and discuss a construction of its combinatorial interpretation in terms of partitions.\end{abstract}

% AMS keywords (used in AMS journals)
\keywords{ \it Bailey pairs; Mock theta functions; $q$-series.}

% AMS subject classifications (used in AMS journals)
\subjclass{\it 2010 Mathematics Subject Classification: 33D15, 11F37}

\section{Introduction}
The Bailey lemma is a powerful tool in the theory of $q$-series and basic hypergeometric functions for proving a wide variety of identities, including identities of Rogers--Ramanujan type and identities for false and mock theta functions, see e.g., [7],[14]. Let $(\alpha,\beta)$ be a pair of sequences, where $\alpha=(\alpha_n)_{n\ge 0}$ and $\beta=(\beta_n)_{n\ge 0}$. Then $(\alpha,\beta)$ is called a Bailey pair relative to $a$ if \[ \beta_n=\sum_{r=0}^n \frac{\alpha_r}{(aq;q)_{n+r}(q;q)_{n-r}}.\] In this definition, $(a;q)_n=(1-a)(1-aq)\cdots(1-aq^{n-1})$ is a $q$-shifted factorial. Following [5], this paper is concerned with an $l$-fold generalization of the notion of a Bailey pair, and for $(\alpha,\beta)$ a pair of sequences such that $\alpha=(\alpha_{n_1,\dots,n_l})_{n_1,\dots,n_l\in\mathbb{N}}$ and $\beta=(\beta_{n_1,\dots,n_l})_{n_1,\dots,n_l\in\mathbb{N}}$, we say that $(\alpha,\beta)$ is an $l$-fold Bailey pair relative to $a_1,\dots,a_l$ if \[ \beta_{n_1,\dots,n_l}=\sum_{r_1=0}^{n_1}\dots\sum_{r_l=1}^{n_l} \frac{\alpha_{r_1,\dots,r_l}}{\prod\limits_{i=1}^l (a_iq;q)_{n_i+r_i}(q;q)_{n_i-r_i}}.\]
The case $l=1,$ is the standard well-known Bailey pair first introduced in [7], and $(U;q)_n=(1-U)(1-Uq)\dots(1-Uq^{n-1})$ [12]. If $\alpha_{n_1,\dots,n_l}=\alpha_{n}$ when $n_1=n_2, \dots =n_l=n,$ $0$ otherwise, then $(\alpha, \beta)$ is said to be of the Joshi-Vyas type [13].
		\par Several authors [10, 14] have discussed a modular form referred to as a \it Mixed Mock Modular \rm form, which is of the form $\sum_{i\le n}M_im_i,$ where $M_i$ is a mock
		modular form, and $m_i$ is a modular form. A mixed mock modular form may be viewed as a generalization of a mock modular form. Mock modular forms, or mock theta functions, have origins dating back to Ramanujan and Watson [18]. They had discovered that these functions exhibited unique properties similar to Jacobi's classical theta functions.  \par This paper is concerned with applying results for the pair of sequences that satisfy (1.1) to obtain multi-sum $q$-series, with one type of example being expressions for products of Mock modular forms and a modular forms. See Zwegers [19] thesis for important material on the first definition of a mock theta function in relation to indefinite theta functions. For a nice overall framework on Bailey's lemma see Warnaar's paper [17].

\section{Observations on the $l$-fold Bailey Lemma} 
The purpose of this section is to outline some observations on the $l$-fold extension of Bailey's lemma which will prove fruitful in our applications to mixed mock modular forms. First, 
we recall the commonly stated relation between the two sequences in a Bailey pair $(\alpha_n, \beta_n),$

\begin{equation}\sum_{n\ge0}(\rho_1)_n(\rho_2)_n(aq/\rho_1\rho_2)^n\beta_n=\frac{(aq/\rho_1)_{\infty}(aq/\rho_2)_{\infty}}{(aq)_{\infty}(aq/\rho_1\rho_2)_{\infty}}\sum_{n\ge0}\frac{(\rho_1)_n(\rho_2)_n(aq/\rho_1\rho_2)^n\alpha_n}{(aq/\rho_1)_n(aq/\rho_2)_n}.\end{equation}
Putting $\rho_1=q^{-N_1},$ and $\rho_2=q^{-N_2},$ gives
 \begin{align} & \sum_{j\ge0}^{\min\{N_1,N_2\}}\frac{a^jq^{j^2}\alpha_j}{(aq;q)_{N_1+j}(aq;q)_{N_2+j}(q;q)_{N_1-j}(q;q)_{N_2-j}} \\ &\quad =\frac{1}{(aq;q)_{N_1+N_2}} \notag \\ & \qquad \times \sum_{j\ge0}^{\min\{N_1,N_2\}}\frac{a^jq^{j^2}}{(q;q)_{N_1-j}(q;q)_{N_2-j}}\beta_j. \notag \end{align}
 
Comparing with the definition of the $l$-fold Bailey pair we define (2.2) to be a sequence, say $\beta_{N_1, N_2},$ and we see we get a new $2$-fold Bailey pair. Iterating this idea along the $l$-fold Bailey lemma gives us the following result.

\begin{theorem} \it If $(\alpha,\beta)$ is a $l$-fold Bailey pair relative to $a_j=a,$ $j=1, 2, \cdots, l,$ then $(\bar{\alpha},\bar{\beta})$ is a $2l$-fold Bailey pair relative to $a_j=a$ where 

\begin{equation}\bar{\alpha}_{n_1, n_2, \dots, n_{2l}}=\begin{cases} q^{r_1^2+r_2^2+\dots+r_l^2}a^{r_1+r_2+\dots r_l}\alpha_{r_1, r_2, \dots, r_l},& \text {if } n_{2i-1}=n_{2i}=r_{i}, 1\le i\le l,\\ 0, & \text{otherwise,} \end{cases}\end{equation}
and
\begin{equation}\bar{\beta}_{n_1, n_2, \dots n_{2l}}=\frac{1}{\prod_{i\ge1}^{l}(aq)_{n_{2i-1}+n_{2i}}}\sum_{i_1, i_2, \dots i_l\ge0}\frac{q^{i_1^2+i_2^2+\cdots +i_l^2}a^{i_1+i_2+\cdots i_l}\beta_{i_1, i_2, \dots i_l}}{(q)_{n_1-i_1}(q)_{n_2-i_1}\cdots (q)_{n_{2l-1}-i_l}(q)_{n_{2l}-i_l}}.\end{equation}
\end{theorem}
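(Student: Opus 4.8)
The plan is to verify directly that the pair defined by (2.4) and (2.5) satisfies the defining relation (1.1) in $2l$ variables; that is, to show that inserting $\alpha_{n_1,\dots,n_{2l}}$ from (2.4) into the $2l$-fold Bailey sum reproduces $\beta_{n_1,\dots,n_{2l}}$ as given in (2.5). First I would write out the right-hand side of (1.1) with $2l$ indices and substitute (2.4). Since the new $\alpha$ is supported only on the ``diagonal'' $s_1=s_2,\ s_3=s_4,\ \dots,\ s_{2l-1}=s_{2l}$, the $2l$-fold sum collapses to an $l$-fold sum over $r_1,\dots,r_l$, giving
\[
\sum_{r_1,\dots,r_l\ge0}\alpha_{r_1,\dots,r_l}\prod_{i=1}^{l}\frac{q^{r_i^2}a^{r_i}}{(aq)_{n_{2i-1}+r_i}(q)_{n_{2i-1}-r_i}(aq)_{n_{2i}+r_i}(q)_{n_{2i}-r_i}},
\]
where the convention $1/(q)_m=0$ for $m<0$ automatically enforces $r_i\le\min(n_{2i-1},n_{2i})$, so the ranges match those in (1.1).

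The key observation is that the kernel multiplying $\alpha_{r_1,\dots,r_l}$ is a product over the $l$ coordinate pairs, each factor depending only on its own $r_i$ (together with $n_{2i-1},n_{2i}$), so the already-established $l=1$ identity (2.2) can be applied to each factor separately. Applying (2.2) with $N_1=n_{2i-1}$ and $N_2=n_{2i}$ to the elementary Bailey pair $\alpha_j=\delta_{j,r_i}$, $\beta_j=1/[(aq)_{j+r_i}(q)_{j-r_i}]$, I obtain for each $i$ the per-factor identity
\[
\frac{q^{r_i^2}a^{r_i}}{(aq)_{n_{2i-1}+r_i}(q)_{n_{2i-1}-r_i}(aq)_{n_{2i}+r_i}(q)_{n_{2i}-r_i}}=\frac{1}{(aq)_{n_{2i-1}+n_{2i}}}\sum_{j_i\ge0}\frac{q^{j_i^2}a^{j_i}}{(q)_{n_{2i-1}-j_i}(q)_{n_{2i}-j_i}(aq)_{j_i+r_i}(q)_{j_i-r_i}}.
\]
This is exactly the content of the remark following (2.2) that the $2$-fold diagonal construction produces a Bailey pair, now isolated one coordinate at a time.

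Substituting these $l$ identities and interchanging the (finite) order of summation --- pulling the new indices $j_1,\dots,j_l$ outward and the original indices $r_1,\dots,r_l$ inward --- produces
\[
\frac{1}{\prod_{i=1}^{l}(aq)_{n_{2i-1}+n_{2i}}}\sum_{j_1,\dots,j_l\ge0}\Bigl(\prod_{i=1}^{l}\frac{q^{j_i^2}a^{j_i}}{(q)_{n_{2i-1}-j_i}(q)_{n_{2i}-j_i}}\Bigr)\sum_{r_1,\dots,r_l\ge0}\frac{\alpha_{r_1,\dots,r_l}}{\prod_{i=1}^{l}(aq)_{j_i+r_i}(q)_{j_i-r_i}}.
\]
The inner sum over $r_1,\dots,r_l$ is precisely $\beta_{j_1,\dots,j_l}$ by the original $l$-fold Bailey relation (1.1), and after relabeling the summation indices $j_1,\dots,j_l$ as $i_1,\dots,i_l$ the whole expression is exactly (2.5), completing the verification.

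I expect the only real obstacle to be organizational rather than analytic: confirming that the collapsed kernel genuinely factorizes across the $l$ pairs so that (2.2) may be applied coordinatewise, and keeping careful track of the summation ranges (handled uniformly by $1/(q)_m=0$ for $m<0$) so that the interchange of the $r$- and $j$-summations is legitimate. Since for fixed $n_1,\dots,n_{2l}$ every sum in sight is finite, no convergence issue arises and the interchange is automatic; the heart of the argument is simply recognizing the one-variable identity (2.2) as the engine that doubles each coordinate independently.
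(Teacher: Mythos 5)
Your proof is correct and follows essentially the same route the paper takes: the engine in both cases is identity (2.2) applied to each pair of doubled coordinates, which is exactly what the author means by ``iterating this idea along the $l$-fold Bailey lemma.'' The paper only sketches this in one sentence after (2.2), so your per-coordinate application of (2.2) to the unit Bailey pair $\alpha_j=\delta_{j,r_i}$, followed by the interchange of the finite $r$- and $j$-sums, is a faithful and complete writing-out of the argument the paper leaves implicit.
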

\begin{proof} One can take the $l$-fold Bailey lemma [4, Theorem 1], and apply the same concept as (2.2). Namely, for each free variable we put $\rho_i=q^{-N_i},$ for $1\le i\le 2l,$ corresponding to the $l$-fold Bailey lemma. The theorem follows after comparison with the definition of the $l$-fold Bailey lemma provided in the introduction.  \end{proof}

\rm
The type of Bailey pairs we have constructed in Theorem 2.1 allows us relate a $2l$-fold $q$-series with a $l$-fold $q$-series. The case $l=1$ of Theorem 2.1 gives a $2$-fold Bailey pair of the Joshi-Vyas type [13]. 

\rm
In [16], Paule offered an operator proof of the special case $N_2\rightarrow\infty$ of (2.2), with $\beta_n$ expanded in its definition, and $\alpha_n$ replaced by $c_nq^{-n^2}.$ As it turns out, we can apply this observation to obtain an $l$-fold analogue of [16, Equation~(1)].

\begin{corollary} For integers $n_i,i\ge0,l\ge1,$ $i\le l,$  we have

$$\sum_{r_1,\dots ,r_l\ge0}^{n_1,\dots, n_l} \frac{c_{r_1,\dots,r_l}a^{r_1+r_2+\cdots r_l}}{\prod\limits_{i=1}^l (aq;q)_{n_i+r_i}(q;q)_{n_i-r_i}}$$
$$=\sum_{r_1, r_2, \dots ,r_l\ge0}\frac{q^{r_1^2+r_2^2+\cdots +r_l^2}a^{r_1+r_2+\cdots r_l}}{\prod\limits_{i=0}^l(q)_{n_1-r_i}}\sum_{k_1,\dots ,k_l\ge0}^{r_1,\dots, r_l} \frac{c_{k_1,\dots,k_l}q^{-k_1^2-k_2^2-\cdots -k_l^2}}{\prod\limits_{i=1}^l (aq;q)_{r_i+k_i}(q;q)_{r_i-k_i}}.$$
\end{corollary}
\begin{proof}We take to $l$-fold Bailey lemma [4, Theorem 1], and put $\rho_i=q^{-N_i},$ for $1\le i\le 2l,$ as was done in the proof of Theorem 2.1. We put $a_i=a$ in each fold of the Bailey pair as well. Then we let $N_{2j}\rightarrow\infty$ for each $j$ with $1\le 2j\le 2l.$ After expanding $\beta$ in the defintion of a $l$-fold Bailey pair we replace $\alpha_{r_1, r_2, \dots, r_l}$ with $c_{r_1, r_2, \dots, r_l} q^{-r_1^2-r_2^2-\cdots -r_l^2}.$  \end{proof}

\section{Multi-sum $q$-series and mixed mock modular forms} 
Here we will consider applications of Theorem 2.1 to mixed mock modular forms. In particular, we shall prove the following result for two third-order mock theta functions and one tenth order mock theta function using the $2$-fold Bailey lemma. The mock theta function on the right side of (3.1) is a newer function detailed in Andrews [6]. The one on the right side of (3.2) is due to Watson [18]. Lastly, the one on the right side of (3.3) has been studied by Choi [11]. 
\begin{theorem} \it We have,
\begin{equation}\sum_{j, n_1, n_2\ge0}\frac{q^{j^2+n_1^2+n_2^2+j+n_1+n_2}(-1)^j}{(q)_{n_1-j}(q)_{n_2-j}(q)_{n_1+n_2+1}(q^2;q^2)_j}=\frac{(-q)_{\infty}}{(q)_{\infty}}\sum_{n\ge0}\frac{q^{2n^2+2n}}{(-q)_{2n+1}},\end{equation}
\begin{equation}\sum_{j, n_1, n_2\ge0}\frac{(-q)_{n_1}(-q)_{n_2}q^{j^2+n_1(n_1+1)/2+n_2(n_2+1)/2+j}(-1)^j}{(q)_{n_1-j}(q)_{n_2-j}(q)_{n_1+n_2+1}(q^2;q^2)_j}=\frac{(-q)_{\infty}(-q;q^2)_{\infty}}{(q)_{\infty}(q;q^2)_{\infty}}\sum_{n\ge0}\frac{q^{n^2+n}}{(-q;q^2)_{n+1}},\end{equation}
\begin{equation}\sum_{j, n_1, n_2\ge0}\frac{(q;q^2)_{n_1}(q;q^2)_{n_2}q^{2j^2+n_1^2+n_2^2}(-1)^{n_1+n_2+j-1}(q^2;q^4)_{j-1}}{(q^2;q^2)_{n_1-j}(q^2;q^2)_{n_2-j}(q^2;q^2)_{n_1+n_2}(q^2;q^2)_{2j-1}}=\frac{(q;q^2)_{\infty}^2(q^4;q^4)_{\infty}}{(q^2;q^2)_{\infty}^2(-q^4;q^4)_{\infty}}\psi(q^4),\end{equation}
where $\psi(q)=\sum_{n\ge1}q^{n(n+1)/2}/(q;q^2)_n.$
\end{theorem}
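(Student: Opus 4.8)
The plan is to derive all three identities from the $l=1$ case of Theorem 1, which converts a classical single-variable Bailey pair into a $2$-fold Bailey pair, and then to apply a specialization of the ordinary Bailey transform (2.1) in each of the two new variables so as to collapse the resulting double sum over $\beta_{n_1,n_2}$ into a single sum over the original $\alpha$. The first step is to read off the underlying single-variable pair. In both (3.1) and (3.2) the $j$-dependent part of the summand is $q^{j^2+j}(-1)^j/(q^2;q^2)_j$, which is exactly $q^{i^2}a^i\beta_i$ at $a=q$ with $\beta_i=(-1)^i/(q^2;q^2)_i$; hence the building block for these two is the Bailey pair $(\alpha_i,\beta_i)$ relative to $a=q$ determined by $\beta_i=(-1)^i/(q^2;q^2)_i$. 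For (3.3) the same role is played by the analogous base-$q^2$ pair, whose $\beta$ produces the factor $(-1)^{j-1}(q^2;q^4)_{j-1}/(q^2;q^2)_{2j-1}$.

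Feeding this pair into Theorem 1 with $l=1$ gives the $2$-fold pair
\[ \alpha_{n_1,n_2}=q^{r^2}a^r\alpha_r \ \ (n_1=n_2=r),\qquad \beta_{n_1,n_2}=\frac{1}{(aq)_{n_1+n_2}}\sum_{i\ge0}\frac{q^{i^2}a^i\beta_i}{(q)_{n_1-i}(q)_{n_2-i}}. \]
The prefactor $1/(q)_{n_1+n_2+1}$ on the left of (3.1)--(3.3) is then accounted for by the factorization $(q)_{n_1+n_2+1}=(1-q)(aq)_{n_1+n_2}$ at $a=q$ (and its $q^2$-analog for (3.3)): substituting $\beta_{n_1,n_2}$ shows that the triple sum on the left of each identity equals $1/(1-q)$ times a weighted double sum $\sum_{n_1,n_2}w_{n_1}w_{n_2}\beta_{n_1,n_2}$, the weight $w_n$ being whatever the chosen specialization of (2.1) attaches to each new variable.

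The choice of specialization is what distinguishes the three identities. For (3.1) I would let $\rho_1,\rho_2\to\infty$ in (2.1); then both $(aq/\rho_i)_n\to1$, the weight on each side is $w_n=\tilde w_n=a^nq^{n^2}$, and applying the transform in each variable converts the $\beta$-sum into a constant multiple of $\sum_{n_1,n_2}\tilde w_{n_1}\tilde w_{n_2}\alpha_{n_1,n_2}$. Since $\alpha_{n_1,n_2}$ is supported on the diagonal $n_1=n_2=r$, this collapses to $\sum_r \tilde w_r^2\,q^{r^2}a^r\alpha_r=\sum_r a^{3r}q^{3r^2}\alpha_r$, i.e.\ exponent $3r^2+3r$ at $a=q$. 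For (3.2) I would instead let $\rho_1\to\infty$ and set $\rho_2=-q$ (with $a=q$); this gives the $\beta$-side weight $w_n=(-q)_nq^{n(n+1)/2}$---precisely the factors $(-q)_{n_i}q^{n_i(n_i+1)/2}$ in (3.2)---while the $(aq/\rho_2)_n=(-q)_n$ occurring in the denominator of the $\alpha$-side cancels the $(-q)_n$ there, leaving $\tilde w_n=q^{n(n+1)/2}$ and hence the collapsed sum $\sum_r q^{2r^2+2r}\alpha_r$. Identity (3.3) is the base-$q^2$ counterpart, run with $q\mapsto q^2$ and a specialization producing the $(q;q^2)_{n_i}$ factors. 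In each case the left-hand triple sum is thereby reduced to an explicit modular infinite product times a single sum $\sum_r q^{cr^2+cr}\alpha_r$ with $c=3,2,2$ respectively.

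The main obstacle is the final identification of this collapsed single sum with the advertised mixed mock modular form: one must show that $\sum_r q^{3r^2+3r}\alpha_r$, $\sum_r q^{2r^2+2r}\alpha_r$, and the $q^2$-analog equal the stated products times the third-order mock theta functions $\sum_n q^{2n^2+2n}/(-q)_{2n+1}$ and $\sum_n q^{n^2+n}/(-q;q^2)_{n+1}$ and the tenth-order function $\psi(q^4)$. I expect this to require the known Bailey-pair/indefinite-theta representations of these mock theta functions, handled by Appell--Lerch manipulations in the spirit of Zwegers [15]; equivalently, it amounts to a single-variable mock theta evaluation for the pair $\beta_i=(-1)^i/(q^2;q^2)_i$ and its $q^2$-analog, and this is where all the genuinely modular content resides. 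Once those three single-sum evaluations are established, substituting them back through the two routine applications of (2.1) completes the proof of (3.1)--(3.3).
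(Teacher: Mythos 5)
Your proposal matches the paper's own argument: the paper forms the same $2$-fold pairs via (2.2) (the $l=1$ step of Theorem 1) from the base pair $\beta_n=(-1)^n/(q^2;q^2)_n$ relative to $a=q$ (resp.\ the Bringmann--Kane pair for (3.3)), and feeds them into the $2$-fold Bailey lemma (3.4) with exactly the specializations you describe ($x,y,z,w\to\infty$ for (3.1); $x=z=-q$, $y,w\to\infty$ for (3.2); $x=z=-q^{1/2}$, $a_1=a_2=1$ for (3.3)). The single-sum mock-theta identifications that you correctly flag as the remaining substantive step are handled in the paper by citation and short auxiliary computations: Andrews [5, eq.\ (1.15)] for (3.1), the Bailey-lemma consequence (3.10) for (3.2), and Choi's tenth-order identity (3.12) for (3.3).
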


\begin{proof} First recall [5] that
\begin{align} & \sum_{n_1\ge0}^{\infty}\sum_{n_2\ge0}^{\infty}(x)_{n_1}(y)_{n_1}(z)_{n_2}(w)_{n_2}(a_1q/xy)^{n_1}(a_2q/zw)^{n_2}\beta_{n_1, n_2} \\ &\quad =\frac{(a_1q/x)_{\infty}(a_1q/y)_{\infty}(a_2q/z)_{\infty}(a_2q/w)_{\infty}}{(a_1q)_{\infty}(a_1q/xy)_{\infty}(a_2q)_{\infty}(a_2q/zw)_{\infty}} \notag \\ & \qquad \times \sum_{n_1\ge0}^{\infty}\sum_{n_2\ge0}^{\infty}\frac{(x)_{n_1}(y)_{n_1}(z)_{n_2}(w)_{n_2}(a_1q/xy)^{n_1}(a_2q/zw)^{n_2}\alpha_{n_1, n_2}}{(a_1q/x)_{n_1}(a_1q/y)_{n_1}(a_2q/z)_{n_2}(a_2q/w)_{n_2}}. \notag \end{align}

\bf{The case (3.1):} \rm We begin by recalling the following Bailey pair relative to $q$ [15, Equation~(2.19)--(2.20)] 
\begin{align*} \alpha_n&=\frac{q^{n^2}(1-q^{2n+1})}{(1-q)}\sum_{|j|\le n}(-1)^jq^{-j^2} \\ \beta_n &= \frac{(-1)^n}{(q^2;q^2)_n} \end{align*}
into (2.2) and insert the resulting $2$-fold pair

\begin{equation}\alpha_{N_1, N_2}=\begin{cases} \frac{q^{2n^2+n}(1-q^{2n+1})}{(1-q)}\sum_{|j|\le n}(-1)^jq^{-j^2},& \text {if } N_1=N_2=n\\ 0, & \text{otherwise,} \end{cases}\end{equation}
and
\begin{equation}\beta_{N_1, N_2}=\frac{1}{(q)_{N_{1}+N_{2}+1}}\sum_{j\ge0}\frac{q^{j^2+j}(-1)^{j}}{(q)_{N_1-j}(q)_{N_2-j}(q^2;q^2)_{j}}.\end{equation}

into (3.4) with $x, y, z, w\rightarrow\infty,$ $a_1=a_2=q$ to get
\begin{equation}\sum_{j, n_1, n_2\ge0}\frac{q^{j^2+n_1^2+n_2^2+j+n_1+n_2}(-1)^j}{(q)_{n_1-j}(q)_{n_2-j}(q)_{n_1+n_2+1}(q^2;q^2)_j}=\frac{1}{(q)_{\infty}^2}\sum_{n\ge0}q^{4n^2+3n}(1-q^{2n+1})\sum_{|j|\le n}(-1)^jq^{-j^2}.\end{equation}
Note that the indefinite quadratic series on the right side of (3.7) is found to be related to a mock theta function noted by Andrews by [6, Equation~(1.15)]
$$\sum_{n\ge0}\frac{q^{2n^2+2n}}{(-q)_{2n+1}}=\frac{1}{(q^2;q^2)_{\infty}}\sum_{n\ge0}q^{4n^2+3n}(1-q^{2n+1})\sum_{|j|\le n}(-1)^jq^{-j^2}.$$ Hence, replacing the indefinite quadratic form series on the right side of (3.7) with this mock theta function after being multiplied by $(q^2;q^2)_{\infty}$ gives the identity.\newline
\bf{The case (3.2):} \rm A simple consequence of the second pair in table one in [3, pg.75] inserted in a limiting case of Bailey's lemma ($\rho_1=-q,$ $\rho_2\rightarrow\infty,$ in (2.1)), is given as
\begin{equation}\sum_{n\ge0}\frac{q^{n(n+1)}}{(-q;q^2)_{n+1}}=\frac{(-q^2;q^2)_{\infty}}{(q^2;q^2)_{\infty}}\sum_{n\ge0}q^{3n^2+2n}(1-q^{2n+1})\sum_{|j|\le n}(-1)^jq^{-j^2}.\end{equation}
(The $q$-series on the left side is a third-order mock theta function [18].) Inserting (3.5)--(3.6) into (3.4) with $x=z=-q, y, w\rightarrow\infty,$ $a_1=a_2=q,$ gives the result after invoking (3.8). \newline
\bf{The case (3.3):} \rm First we note a result due to Choi [11, page~508]
 \begin{align*} &\sum_{n\ge1}\frac{q^{n(n+1)/2}}{(q;q^2)_n} \\ &\quad =\frac{(-q)_{\infty}}{(q)_{\infty}} \sum_{n\ge0}q^{5n^2+4n+1}(1-q^{2n+1})\sum_{|j|\le n}q^{-j^2}\\ & \qquad - \frac{(-q)_{\infty}}{(q)_{\infty}} \sum_{n\ge1}q^{5n^2-n}(1-q^{2n})\sum_{j=-n}^{n-1}q^{-j^2-j}. \end{align*}
This follows from the pair [2, page~131, Equation~(7.18)--(7.19)] inserted into the $\rho_1=-1,$ $\rho_2\rightarrow\infty$ case of (2.1). Inserting the pair due to Bringmann and Kane [9, Theorem 2.3, (1)] (where $\beta_0=0$)
\begin{align*} \alpha_{2n}&=q^{2n^2-2n}(1-q^{4n})\sum_{j=-n}^{n-1}q^{-2j^2-2j}, \\
\alpha_{2n+1}&=-q^{2n^2}(1-q^{4n+2})\sum_{|j|\le n}(-1)^jq^{-2j^2}, \\
 \beta_n &= \frac{(-1)^n(q;q^2)_{n-1}}{(q)_{2n-1}}, \end{align*}

 into (2.2) and then inserting the resulting pair into (3.4) with $x=z=-q^{1/2}, y, w\rightarrow\infty,$ $a_1=a_2=1,$ gives
 
 \begin{align*} &\sum_{\substack{j,n_1,n_2\ge 0 \\ n_1-j,n_2-j\ge 0}}\frac{(-q^{1/2})_{n_1}(-q^{1/2})_{n_2}q^{j^2+n_1^2/2+n_2^2/2}(-1)^{j}(q;q^2)_{j-1}}{(q)_{n_1-j}(q)_{n_2-j}(q)_{n_1+n_2}(q)_{2j-1}} \\ &\quad =\frac{(-q^{1/2};q)_{\infty}^2}{(q;q)_{\infty}^2} \sum_{n\ge1}q^{10n^2-2n}(1-q^{4n})\sum_{j=-n}^{n-1}q^{-2j^2-2j} \\ & \qquad - \frac{(-q^{1/2};q)_{\infty}^2}{(q;q)_{\infty}^2} \sum_{n\ge0}q^{10n^2+8n+2}(1-q^{4n+2})\sum_{|j|\le n}q^{-2j^2}. \end{align*}
Hence comparing these indefinite theta functions with Choi's indefinite theta representation and a little manipulation gives (3.3).
\end{proof}
\section{Multi-sum identities for products} 
 \par We provide some further identities, namely of the sum-to-product type, including multi-dimensional analogues of the Durfee square identity. First, it should be noted the observation made in the second section may be applied in other directions. For example, the identity
(2.2) may be used to obtain many interesting $q$-polynomial identities. The choice [5, Equation~(2.9)--(2.10)] $\alpha_0=1,$ $\alpha_n=(-1)^nq^{n(3n-1)/2}(1+q^n),$ $n>0,$ and $\beta_n=1/(q)_n,$ in (2.2) gives us a key identity used in [4, Equation~(5.33)], 
$$\sum_{j\in\mathbb{Z}}\frac{(-1)^jq^{j(5j+1)/2}}{(q)_{L_1-j}(q)_{L_1-j}(q)_j^2}=\frac{1}{(q)_{L_1+L_2}}\sum_{n\ge0}\frac{q^{n^2}}{(q)_{L_1-n}(q)_{L_2-n}(q)_n},$$
which may in turn be used to obtain [4, Equation~(5.30)] 
$$\sum_{n_1,n_2\ge0}\frac{q^{n_1^2+n_1n_2+n_2^2}}{(q)_{L_1-n_1}(q)_{L_2-n_2}(q)_{n_1}(q)_{n_2}(q)_{n_1+n_2}}=\sum_{n\ge0}\frac{q^{n^2}}{(q)_{L_1+L_2}(q)_{L_1-n}(q)_{L_2-n}(q)_n},$$
after appealing to the $l=2$ case of Theorem 2.1 with a $2$-fold Bailey pair obtained by Joshi and Vyas [13] (with $\alpha_{0,0}=1,$ $\alpha_{n_1,n_2}=0$ if $n_1\neq n_2$)

$$\begin{aligned}\alpha_{n_1,n_2}&=(-1)^nq^{n(n-1)/2}(1+q^n), \\ &\quad \text{if $n=n_1=n_2,$ and}\\ \beta_{n_1,n_2} &= \frac{q^{n_1n_2}}{(q)_{n_1}(q)_{n_2}(q)_{n_1+n_2}}. \end{aligned}$$

 Another direct corollary of [5, Equation~(2.9)--(2.10)] leads us to the Rogers--Ramanujan type identity (by using [5, Corollary 1, $s=2$], Theorem 2.1 with $l=1,$ and Jacobi's triple product identity), 
\begin{equation}\sum_{j,n_1,n_2\ge0}\frac{q^{j^2+n_1^2+n_2^2}}{(q)_{n_1+n_2}(q)_{n_1-j}(q)_{n_2-j}(q)_j}=\frac{(q,q^8,q^9;q^9)_{\infty}}{(q)_{\infty}^2},\end{equation}
which can be shown to be related to one of the four modulus 9 Andrews--Gordon identities.
Second, to see how one may apply Theorem 2.1 in its full generality, we may consider the pair $\alpha_0=1,$ $0$ otherwise, and $\beta_n=1/(q)_{n}^2.$ Using this pair with (2.2) gives us
\begin{equation} \frac{1}{(q)_{n_1}^2(q)_{n_2}^2}=\frac{1}{(q)_{n_1+n_2}}\sum_{j\ge0}\frac{q^{j^2}}{(q)_{n_1-j}(q)_{n_2-j}(q)_{j}^2},\end{equation}
which is equivalent to the $2$-fold Bailey pair 

\begin{equation}\alpha_{n_1, n_2}=\begin{cases} 1,& \text {if } n_1=n_2=0,\\ 0, & \text{otherwise,} \end{cases}\end{equation}
and
\begin{equation}\beta_{n_1, n_2}=\frac{1}{(q)_{n_1+n_2}}\sum_{j\ge0}\frac{q^{j^2}}{(q)_{n_1-j}(q)_{n_2-j}(q)_{j}^2}.\end{equation}
Applying this pair to the $l=2$ case of Theorem 2.1 gives us 
\begin{equation}\alpha_{n_1, n_2,n_3,n_4}=\begin{cases} 1,& \text {if } n_1=n_2=n_3=n_4=0,\\ 0, & \text{otherwise,} \end{cases}\end{equation}
and
\begin{align} & \beta_{n_1, n_2, n_3, n_4} \\ &\quad =\frac{1}{(q)_{n_1+n_2}(q)_{n_3+n_4}} \notag \\ & \qquad \times \sum_{i_1,i_2,j\ge0}\frac{q^{i_1^2+i_2^2+j^2}}{(q)_{n_1-i_1}(q)_{n_2-i_1}(q)_{n_3-i_2}(q)_{n_4-i_2}(q)_{i_1+i_2}(q)_{i_1-j}(q)_{i_2-j}(q)_{j}^2}. \notag \end{align}

Therefore we may use the $l$-fold Bailey lemma to give us
\begin{equation}\sum_{n_1,n_2,j\ge0}\frac{q^{j^2+n_1^2+n_2^2}}{(q)_{n_1+n_2}(q)_{n_1-j}(q)_{n_2-j}(q)_{j}^2}=\frac{1}{(q)_{\infty}^2},\end{equation}
and

\begin{equation}\sum_{n_1, n_2, n_3, n_4,i_1,i_2,j\ge0}\frac{q^{n_1^2+n_2^2+n_3^2+n_4^2+i_1^2+i_2^2+j^2}}{(q)_{n_1+n_2}(q)_{n_3+n_4}(q)_{n_1-i_1}(q)_{n_2-i_1}(q)_{n_3-i_2}(q)_{n_4-i_2}(q)_{i_1+i_2}(q)_{i_1-j}(q)_{i_2-j}(q)_{j}^2}=\frac{1}{(q)_{\infty}^4}.\end{equation}
Continuing this idea leads us to an expression for $1/(q)_{\infty}^{2M},$ $M\in\mathbb{N}.$
\par We give a combinatorial interpretation of (4.7). Let the $q$-binomial coefficients be denoted as
$$\binom{n}{k}_q:=\frac{(q)_n}{(q)_k(q)_{n-k}}.$$
Recall that a partition of $n$ is the number of ways to write $n$ as a sum of smaller numbers, which are called parts. The Ferrers graph is the display of the partition in which each part is represented as a row of nodes [8, Definition 2.2]. The Durfee square of a partition is the largest square of nodes in the upper left hand corner of the Ferrers graph [8, Definition 2.3]. We will need a Lemma from Bressoud and Zeilberger, as well as their following details related to their proof.
\begin{lemma} ([8, Lemma 2.5]) let $m_1\ge m_2\ge \dots m_k\ge0,$ $k\ge1,$ then
$$\frac{q^{m_1^2+m_2^2+\cdots +m_k^2}}{(q)_{m_1}} \binom{m_1}{m_2}_q\binom{m_{2}}{m_{3}}_q\cdots \binom{m_{k-1}}{m_{k}}_q=\sum_{m\ge0}D_k(m)q^m,$$
where $D_k(m)$ is the number of partitions of $m$ such that for $1\le i\le k,$ the $i$-th Durfee square of the Ferrers graph of the partition is an $m_i\times m_i$ square, and there are no parts below the $k$-th Durfee square. 
\end{lemma}
In the details which follow in [8, pg.45--46] of this lemma, it is noted that $q^{m_1^2+m_2^2+\cdots +m_k^2}$ generates $k$ Durfee squares. The size of the Durfee square corresponding to the partition generated by $q^{m_k^2}$ is clearly $m_k.$ Therein, it is also noted that in constructing the partition, for $k\ge2$ the parts generated by the component $\binom{m_{k-1}}{m_{k}}_q$ are placed on the right side of the Durfee square of size $m_k.$ These are partitions with parts bounded in magnitude by $m_{k-1}-m_k,$ and number of parts bounded by $m_k.$ Returning to (4.7), we note that we may write
 \begin{equation}\frac{q^{j^2+n_1^2+n_2^2}}{(q)_{n_1+n_2}(q)_{n_1-j}(q)_{n_2-j}(q)_{j}^2}=\frac{q^{j^2+n_1^2+n_2^2}}{(q)_{n_1+n_2}(q)_{n_1}(q)_{n_2}}\binom{n_1}{j}_q\binom{n_2}{j}_q. \end{equation}
We examine the right side of (4.9) in three components,
 \begin{equation} \frac{1}{(q)_{n_1+n_2}}\times \frac{q^{n_1^2+j^2}}{(q)_{n_1}}\binom{n_1}{j}_q\times \frac{q^{n_2^2}}{(q)_{n_2}}\binom{n_2}{j}_q. \end{equation}
 Now we say a partition pair of $n$ is a pair of partitions $(\pi_1,\pi_2)$ where the sum of both their respective parts is $n.$ For example, the right side of (4.7) generates the number of partition pairs, where both $\pi_i,$ $i=1,2,$ have no restriction on their parts. Similarly, a partition triple of $n$ is a triple of partitions $(\pi_1,\pi_2, \pi_3)$ where the sum of their parts is $n.$ The middle component of (4.10) is Lemma 1 with $k=2,$ $m_1=n_1,$ $m_2=j.$ The third component in (4.10) generates the same partition as the second, but with the condition that the smallest Durfee square $j\times j$ is removed. Consequently, we place the parts that were originally to the right of the $j\times j$ Durfee square below the remaining Durfee square. Observe the first component of (4.10) generates partitions into at most $n_1+n_2$ parts. Let $s(\pi)$ denote the size of the largest Durfee square of $\pi.$ 
 \par We combine these observations to interpret (4.10) as the generating function for a partition triple $(\pi_1,\pi_2, \pi_3)$ of $n$ where: (1) $\pi_1$ is the number of partitions into at most $s(\pi_2)+s(\pi_3)$ parts. (2) $\pi_2$ is the number of partitions such that there are two Durfee squares, $j\times j$ being the smallest Durfee square of the Ferrars graph, and there are no parts below this square. (3) $\pi_3$ is the number of partitions such that there is one Durfee square which is greater than or equal to the smallest Durfee square of $\pi_2,$ and not necessarily equal to the largest Durfee square of $\pi_2.$ This is because only $j$ is bounded by $\min\{n_1,n_2\}$ in the triple sum. Parts below the Durfee square are bounded in magnitude by $s(\pi_3)-j,$ and number of parts bounded by $j.$ Summing over $j,n_1,n_2\ge0$ gives the left side of (4.7). In comparison with (4.8), it would seem a similar, albeit more complicated, interpretation may be offered for the $2M$-th power of the generating function for the number of partitions of $n.$

1390 Bumps River Rd. \\*
Centerville, MA
02632 \\*
USA \\*
\\*
ul. A. E. Ody\'{n}ca 47 \\*
02-606 Warsaw\\*
Poland\\*
E-mail: alexpatk@hotmail.com, alexepatkowski@gmail.com
\end{document}